\newtheorem*{claim*}{Claim}
\newtheorem{ex}{Example}[section]
\newtheorem{theorem}{Theorem}[section]
\newtheorem{lemma}{Lemma}[section]
\newtheorem{definition}{Definition}[section]
\numberwithin{equation}{section}
\providecommand{\keywords}[1]
{
	\small	
	\textbf{\textit{Keywords---}} #1
}
\providecommand{\msc}[1]
{
	\small	
	\textbf{\textit{Mathematics Subject Classification (2020) ---}} #1
}
\title{
A numerical scheme for a diffusion equation with nonlocal nonlinear boundary condition}
\author{Joydev Halder\thanks{halderjoydev@gmail.com}\ }
\author{Suman Kumar Tumuluri \thanks{suman.hcu@gmail.com}}
\affil{School of Mathematics and Statistics, University of Hyderabad, Hyderabad, India.}
\date{\today}
\begin{document}
\maketitle

\begin{abstract}
In this article, a numerical scheme to find approximate solutions to  the McKendrick-Von Foerster equation with diffusion (M-V-D) is presented. The main difficulty in employing the standard analysis to study the properties of this scheme is due to presence of nonlinear and nonlocal term in the Robin boundary condition in the M-V-D. To overcome this, we use the abstract theory of discretizations based on the notion of stability threshold to analyze the scheme. Stability, and convergence of the proposed numerical scheme are established. 
\end{abstract}
\keywords{Finite difference method; nonlocal boundary condition; McKendrick-Von Foerster equation; stability threshold; convergent numerical scheme}\\
\msc{35K20, 65M06, 65M12}
\section{Introduction}
The McKendrick-Von Foerster equation arises naturally in many areas of mathematical biology such as cell proliferation, and demography modeling (see \cite{Diekmann2000_book, Murray2002_book, Murray2003_book, Perthame2007, Perthame2015_book, Thieme2003_book}). In particular, the McKendrick–von Foerster equation is one amongst the important models whenever age structure is a vital feature in the modeling (see \cite{halder2020_1, Iannelli2017, inaba2017_book}). In the recent years, the McKendrick–Von Foerster equation with diffusion (M-V-D) has attracted interest of many engineers as well as mathematicians due to its applications in the modeling of  thermoelasticity, neuronal networks etc, (see \cite{Day1982_40, Day1985_book,  Michel2020_25,  Kakumani2016_39, Kakumani2017_22,Michel2013_36}). The main difficulty in the study of M-V-D is due to the nonlocal nature of the partial differential equation (PDE) and the boundary condition. The qualitative properties of the M-V-D equation have been developed by many authors. Though, numerical study of non-local equations got considerable focus, relatively less attention was paid to problems with the Robin boundary condition.\\
In this paper, our objective is to propose and analyze a numerical scheme to find approximate solutions to the following nonlinear diffusion equation
\begin{equation}\label{e1}
	\left\{
	\begin{aligned}
		&u_t(x,t)+u_x(x,t)+d\Big(x,s_1(t)\Big)u(x,t)=u_{xx}(x,t), \hskip 1 em x \in (0,a_\dagger), \hskip .5 em t> 0
		\\
		&u(0,t)-u_x(0,t)=\int_0^{a_\dagger}B\Big(x,s_2(t)\Big)u(x,t)dx, \hskip 1 em t \geq 0,
		\\
		&u({a_\dagger},t)=0, \ t\geq 0,
		\\
		&u(x,0)=u_0(x), \hskip 1 em x \in (0,a_\dagger),
		\\
		&s_i(t)=\int_0^{a_\dagger} \psi_i(x)u(x,t)dx, \hskip 1 em t \geq 0,
	\end{aligned}
	\right.
\end{equation}
where $a_\dagger>0$.	
In the given model, the unknown function $u(x,t)$ represents the age-specific density of individuals of age $x$ at time $t$. The function $d$ represents the death rate and depends on  $x$ and the environmental factor $s_1$. Similarly, the fertility rate $B$ depends on the age $x$ and the environmental factor $s_2$. Both the functions  $\psi_1$ and $\psi_2$ are  called the competition weight functions. Moreover, the functions $d$ and $B$ are assumed to be non-negative.
\\
In \cite{Kakumani2016_39}, the authors considered the  M-V-D with nonliner nonlocal Robin boundary condition and studied the existence and uniqueness of the solution. The authors of \cite{Kakumani2018_34} proposed a convergent numerical scheme to the M-V-D.  On the other hand, the existence of a global solution to the M-V-D in a bounded domain with nonliner nonlocal Robin boundary condition was proved when $d=d(x)$ in \cite{Kakumani2017_22}. Recently in \cite{halder_in}, an implicit finite difference scheme has been introduced to approximate the solution to the M-V-D  in a bounded domain with nonliner nonlocal Robin boundary condition at both the boundary points. Moreover, the wellposedness and the stability of the numerical scheme are proved using the method of upper and lower solution with the aid of the discrete maximum principle.
\\
The author of  \cite{marcos1991_22} presented an upwind scheme for a nonlinear hyperbolic integro-differential equation  with nonlocal boundary condition.  The analysis was carriedout employing  the general analytic framework developed in \cite{marcos1988_8, marcos1988_104, sanz1985_132}. The notion  of `stability with threshold' and a result due to Stetter (see \cite{stetter1973_23}, Lemma 1.2.1) were the most important tools for the analysis. 
\\
The above mentioned results inspired  us to propose an explicit finite difference numerical scheme to \eqref{e1}. The main difficulty in the analysis of the proposed numerical scheme is due to the nonlinearity and the Robin boundary condition that are presented in \eqref{e1}. The objective of this paper is to establish the stability and the convergence of our numerical method. Since the scheme is nonlinear, the standard techniques of proving stability (for instance, the Lax  theory etc). can not be used. Instead, the notion of nonlinear stability (with threshold)  is used to arrive at the convergence result.
\\
This article is organized as follows. In Section 2, we present a finite difference scheme and define the required  norms to use the general discretization framework. Moreover, we introduce the notion of  stability with  $h$-dependent thresholds. We prove consistency, stability and convergence results  in Section 3. Finaly, in Section 4, numerical examples are provided to justify the convergence results that are proved.
\section{The numerical scheme}
\noindent Let $h$, $k$ be the spacial and temporal step sizes. Denote by $(x_i,t^n)$ a typical grid point, where $x_i=ih$, and $t^n=nk$. Moreover, we fix $T>0$, assume that $a_\dagger = 2(M'+3)h$ for some $M'\in \mathbb{N}$ and $T=Nk$ for some $N \in \mathbb{N}$. To simplify the notations, we write $M=2(M'+3)$.
For every grid  point $(x_i,t^n)$, we denote the numerical solution by $U_i^n$, and set $\Psi_i=\psi(x_i)$, $\boldsymbol{U}^n=(U_1^n,U_2^n,U_3^n,\dots,U_{M-1}^n)$,  $\boldsymbol{\Psi}=(\Psi_1,\Psi_2,\Psi_3,\dots,\Psi_{M-1})$.\\
If $\boldsymbol{U}=(U_1,U_2,U_3,\dots,U_{M-1})$ and $\boldsymbol{V}=(V_1,V_2,V_3,\dots,V_{M-1})$, we define $$\mathcal{Q}_h(\boldsymbol{V})=\frac{4h}{3}(2V_1-V_2+2V_3) +\frac{h}{3}\sum\limits_{i=2}^{M'} (V_{2i}+4V_{2i+1}+V_{2i+2})+\frac{4h}{3}(2V_{2M'+3}-V_{2M'+4}+2V_{2M'+5}),$$ $$\boldsymbol{U\cdot V}=(U_1V_1,U_2V_2,U_3V_3,\dots,U_{M-1}V_{M-1}).$$
With the notation introduced so far, we propose the following scheme for \eqref{e1} using the forward difference approximation for $u_t$, the backward difference for $u_x$, and the centered in space discretization for $u_{xx}$: 
\begin{equation}\label{scheme}
	\left\{
	\begin{aligned}
		&\frac{U_i^{n+1}-U_i^n}{k}+\frac{U_{i}^n-U_{i-1}^n}{h}+d_i\bigg(\mathcal{Q}_h\Big(\boldsymbol{{\Psi}}_1 \cdot \boldsymbol{U}^n \Big)\bigg)U_i^n=\frac{U_{i+1}^n+U_{i-1}^n-2U_{i}^n}{h^2}, \quad 1 \leq i \leq M-1, 0 \leq n \leq N-1,
		\\
		& \left(1+\frac{1}{h}\right) U_0^n- \frac{1}{h}U_1^n=\mathcal{Q}_h\bigg(\boldsymbol{B}\Big(\mathcal{Q}_h(\boldsymbol{\Psi}_2 \cdot \boldsymbol{U}^n)\Big) \cdot \boldsymbol{U}^n\bigg), \quad 0 \leq n \leq N,
		\\
		&U_M^n=0,  \quad 0 \leq n \leq N,
		\\
		&U_i^0=u_0(x_i), 1 \leq i \leq M-1.
	\end{aligned}
	\right.
\end{equation}
In order to carry out the analysis within an abstract theory of discretizations, we introduce the general discretization framework. For, we define the spaces $$X_h = Y_h = \mathbb{R}^{N+1} \times (\mathbb{R}^{M-1})^{N+1}\times \mathbb{R}^{N+1}.$$
We also introduce the operator $\Phi_h \colon X_h \to Y_h$, defined through the formulae
$$ \Phi_h (\boldsymbol{V_0}, \boldsymbol{V^0}, \boldsymbol{V^1},...,\boldsymbol{V^N},\boldsymbol{V_M}) = (\boldsymbol{P_0}, \boldsymbol{P^0}, \boldsymbol{P^1},...,\boldsymbol{P^N},\boldsymbol{P_M}),$$
where
\begin{equation}
	\begin{aligned}
		&\boldsymbol{P}_0=(P_0^0,P_0^1, \cdots, P_0^N),
		\\
		&P_0^n=\left(1+\frac{1}{h}\right) V_0^n- \frac{1}{h}V_1^n-\mathcal{Q}_h\bigg(\boldsymbol{B}\Big(\mathcal{Q}_h(\boldsymbol{\Psi}_2 \cdot \boldsymbol{V}^n)\Big) \cdot \boldsymbol{V}^n\bigg), \quad 0\leq n \leq N,
		\\
		&\boldsymbol{P}_M=(P_M^0,P_M^1, \cdots, P_M^N),
		\\
		&P_M^n=\frac{V_M^n}{h}, \quad 0\leq n \leq N,
		\\
		&\boldsymbol{P}^n=(P_1^n,P_2^n, \dots,P_{M-1}^n), \quad 0\leq n \leq N,
		\\
		&P_i^0=V_i^0-U_i^0, \quad  1\leq i \leq M-1,
		\\
		&P_i^n= \frac{V_i^{n}-V_i^{n-1}}{k}+\frac{V_{i}^{n-1}-V_{i-1}^{n-1}}{h}+d_i\Big(\mathcal{Q}_h(\boldsymbol{{\Psi}}_1 \cdot \boldsymbol{V}^{n-1} )\Big)V_i^{n-1}
		\\
		&\quad \quad \quad-\frac{V_{i+1}^{n-1}+V_{i-1}^{n-1}-2V_{i}^{n-1}}{h^2}, \ 1\leq n \leq N, 1\leq i \leq M-1.
	\end{aligned}
\end{equation}
Now $\boldsymbol{U}_h = (\boldsymbol{U}_0, \boldsymbol{U}^0, \boldsymbol{U}^1,\cdots, \boldsymbol{U}^N) \in X_h$ is a solution to \eqref{scheme} if and only if it is a solution of the discrete problem 
\begin{equation}\label{equation}
	\Phi_h(\boldsymbol{U}_h)=\boldsymbol{0} \in Y_h.
\end{equation}
To investigate how close $\boldsymbol{U_h}$ is to $u$, we first need to  choose an element $\boldsymbol{u_h} \in X_h$, which is a suitable discrete representation of $u$. In particular, our choice is the set of nodal values of the theoretical solution $u$, namely
\begin{equation}\label{suitable}
	\begin{aligned}
		&\boldsymbol{u}_h = (\boldsymbol{u}_0, \boldsymbol{u}^0, \dots, \boldsymbol{u}^N,\boldsymbol{u}_M) \in X_h,
\end{aligned}
\end{equation}
where
\begin{equation}\label{adisc}
	\left\{
\begin{aligned}
		&\boldsymbol{u}_0=(u^0_0, u^1_0, \dots, u^N_0) \in \mathbb{R}^{N+1},\ u^n_0=u(0,t_n), \ 0\leq n \leq N,
		\\
		&\boldsymbol{u}^n=(u^n_1, u^n_2, \dots,u^n_{M-1}) \in \mathbb{R}^{M-1},\ u^n_i=u(x_i,t_n),\  1\leq i \leq M-1, \ 0\leq n \leq N,
		\\
		&\boldsymbol{u}_M=(u^0_M, u^1_M, \dots, u^N_M) \in \mathbb{R}^{N+1},\ u^n_M=u(a_\dagger,t_n), \ 0\leq n \leq N.
	\end{aligned}
\right.
\end{equation}
Then the global discretization error is defined to be the vector 
$$\boldsymbol{e_h=u_h- U_h} \in X_h,$$
and the local discretization error is given by 
$$\boldsymbol{I_h}= \Phi_h(\boldsymbol{u}_h) \in Y_h.$$
In order to measure the magnitude of errors, we define the following norms in the spaces $X_h$ and $Y_h$:
$$||(\boldsymbol{V}_0,\boldsymbol{V}^0,\boldsymbol{V}^1,...,\boldsymbol{V}^N,\boldsymbol{V}_M)||_{X_h}=h(||\boldsymbol{V}_0||_*+||\boldsymbol{V}_M||_*)+ \max \{||\boldsymbol{V}^0||, ||\boldsymbol{V}^1||,\dots, ||\boldsymbol{V}^N|| \} ,$$
$$||(\boldsymbol{P}_0,\boldsymbol{P}^0,\boldsymbol{P}^1,...,\boldsymbol{P}^N,\boldsymbol{P}_M)||_{Y_h}=\left(||\boldsymbol{P}_0||_*^2+||\boldsymbol{P}^0||^2+h||\boldsymbol{P}_M||_*^2+\sum\limits_{n=1}^N k||\boldsymbol{P}^n||^2\right)^{1/2},$$
where $||\boldsymbol{V}^n||^2=\sum\limits_{i=1}^{M-1} h|{V}_i^n|^2$ and $||\boldsymbol{V}_0||_*^2=\sum\limits_{n=0}^N k|{V}_0^n|^2$.\\
For $\boldsymbol{V} \in \mathbb{R}^{M-1}, \boldsymbol{Z} \in \mathbb{R}^{N+1}$, we define  $$\langle \boldsymbol{V}, \boldsymbol{W} \rangle= \sum\limits_{i=1}^M h V_i W_i,$$  $$||\boldsymbol{V}||_\infty= \max \limits_{1 \leq j \leq {M-1}} |V_i|, \quad ||\boldsymbol{Z}||^\infty= \max \limits_{0 \leq n \leq N} |Z^n|.$$ 
\\
For the sake of completeness, we give the following standard definitions (see \cite{marcos1991_22}).
\begin{definition}(Consistency)
	The discretization \eqref{equation} is said to be consistent with \eqref{e1} if 
	$$\lim \limits_{h \to 0} ||\Phi_h(\boldsymbol{u}_h)||_{Y_h}=\lim \limits_{h \to 0} ||\boldsymbol{I}_h||_{Y_h}=0.$$
	Moreover, if $ ||\boldsymbol{I}_h||_{Y_h}=\mathcal{O}(h^p)+\mathcal{O}(k^q)$ then we say that $(p,q)$ is the order of the consistency.
\end{definition}
\begin{definition}(Stability)
	The discretization \eqref{equation} is said to be stable restricted to the thresholds $R_h$ if there exists two positive constants $h_0$ and $S$ such that  for all  $h \leq h_0$ and for all $\boldsymbol{V}_h$, $\boldsymbol{W}_h$ in the open ball $B(\boldsymbol{u}_h,R_h)$ in $ X_h$ with center $\boldsymbol{u}_h$ and radius $R_h$ satisfying 
	$$||\boldsymbol{V}_h - \boldsymbol{W}_h||_{X_h} \leq S ||\Phi_h(\boldsymbol{V}_h)-\Phi_h(\boldsymbol{W}_h)||_{Y_h}.$$
\end{definition}
\begin{definition}(Convergence)
	The discretization \eqref{equation} is said to be convergent if there exists $h_0>0$ such that, for all  $h \leq h_0$, \eqref{equation} has a solution $\boldsymbol{U}_h$ for which 
	$$\lim \limits_{h \to 0} ||\boldsymbol{u}_h - \boldsymbol{U}_h||_{X_h}=\lim \limits_{h \to 0} ||\boldsymbol{e}_h||_{X_h}=0.$$
\end{definition}
The following Theorem, established in \cite{marcos1988_104} based on a result due to Stetter (see \cite{stetter1973_23}), is very important in our analysis.
\begin{theorem}(Cf. \cite{marcos1988_104})\label{marcos_them}
	Assume that \eqref{equation} is consistent and stable with thresholds $M_h$. If $\Phi_h$ is continuous in $B(u_h,M_h)$ and $||I_h||=\mathcal{O}(M_h)$ as $h \to 0$, then:\\
	$(i)$ for  sufficiently small $h>0$,  discrete equation \eqref{equation} admits a unique solution in $B(u_h, M_h)$.
	\\
	$(ii)$ the solutions to \eqref{equation} converge to the solution to \eqref{scheme} as $h$ tends to $0$. Furthermore, the order of convergence is not smaller than the order of consistency.
\end{theorem}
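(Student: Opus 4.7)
The plan is to derive both conclusions from the stability estimate, once existence of a zero of $\Phi_h$ in $B(u_h,M_h)$ is in hand. Uniqueness in that ball is immediate: any two zeros $V_h,W_h$ satisfy $\|V_h-W_h\|_{X_h}\leq S\|\Phi_h(V_h)-\Phi_h(W_h)\|_{Y_h}=0$ by the stability inequality. Likewise, once existence of $U_h\in B(u_h,M_h)$ with $\Phi_h(U_h)=0$ is known, applying stability to the pair $(u_h,U_h)$ gives
$$\|e_h\|_{X_h}=\|u_h-U_h\|_{X_h}\leq S\|\Phi_h(u_h)-\Phi_h(U_h)\|_{Y_h}=S\|I_h\|_{Y_h},$$
so consistency yields $\|e_h\|_{X_h}\to 0$, and any quantitative bound $\|I_h\|_{Y_h}=\mathcal{O}(h^p)+\mathcal{O}(k^q)$ transfers without loss to $\|e_h\|_{X_h}$, producing the stated order of convergence.

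The substantive step is therefore producing $U_h$. Since $X_h$ and $Y_h$ are finite-dimensional vector spaces of the same dimension, I would use a continuation argument based on invariance of domain. The stability estimate, rewritten as $\|\Phi_h(V_h)-\Phi_h(W_h)\|_{Y_h}\geq S^{-1}\|V_h-W_h\|_{X_h}$, shows that the restriction $\Phi_h|_{B(u_h,M_h)}$ is injective; combined with continuity of $\Phi_h$, the Brouwer invariance-of-domain theorem then implies that $\Phi_h(B(u_h,M_h))$ is open in $Y_h$. To show that $0$ lies in this image, I would lift the straight segment $\gamma(t)=(1-t)\Phi_h(u_h)$, $t\in[0,1]$, through the local inverse of $\Phi_h$ starting from $u_h$. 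Uniqueness makes the lift single-valued, and the stability estimate controls its displacement via $\|\Phi_h^{-1}(\gamma(t))-u_h\|_{X_h}\leq St\|I_h\|_{Y_h}$. Since $\|I_h\|_{Y_h}=\mathcal{O}(M_h)$, for $h$ sufficiently small the lift stays strictly inside $B(u_h,M_h)$, and a routine open/closed argument on the set of $t\in[0,1]$ for which the lift exists extends it all the way to $t=1$, delivering $U_h=\Phi_h^{-1}(0)\in B(u_h,M_h)$.

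The main obstacle in this scheme is the calibration inside the continuation step: to prevent the lifted curve from exiting $B(u_h,M_h)$, the implicit constant in $\|I_h\|_{Y_h}\leq C\,M_h$ must be strictly less than $1/S$ for $h$ small, a quantitative condition that the assumption $\|I_h\|_{Y_h}=\mathcal{O}(M_h)$ is intended to encode, and which, if only $C\leq 1/S$ is known, can always be arranged by running the argument on a ball of radius $\lambda M_h$ with $\lambda\in(SC,1)$. Apart from this calibration and the invocation of invariance of domain in finite dimensions, the theorem is an essentially formal consequence of the stability and consistency hypotheses, which is why it is stated and used here as an abstract tool coming from Stetter and Marcos.
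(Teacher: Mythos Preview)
The paper does not supply a proof of this theorem: it is quoted verbatim as an external result from \cite{marcos1988_104}, itself resting on Stetter's Lemma~1.2.1, and is used only as a black box in the convergence argument (Theorem~\ref{convergence}). There is therefore no in-paper proof to compare your proposal against.

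That said, your sketch is the standard argument and is essentially correct: uniqueness and the error bound $\|e_h\|_{X_h}\le S\|I_h\|_{Y_h}$ are indeed formal consequences of stability, and existence via a continuation/homotopy lift combined with invariance of domain in finite dimensions is precisely Stetter's device. One small slip: your proposed calibration fix---replacing $M_h$ by $\lambda M_h$ with $\lambda\in(SC,1)$---goes the wrong way. Shrinking the ball makes it \emph{harder} for the lift to stay inside, not easier; stability on $B(u_h,M_h)$ does restrict to the smaller ball, but the constraint $S\|I_h\|_{Y_h}<\lambda M_h$ becomes more demanding as $\lambda$ decreases. The genuine resolution, and the one implicitly used in this paper, is on the threshold side: in Theorem~\ref{stability} stability is established for $R_h=Rh$ with $R$ an \emph{arbitrary} fixed positive constant, while consistency gives $\|I_h\|_{Y_h}=\mathcal{O}(h)$ with a constant independent of $R$. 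One then simply chooses $R$ large enough that $S\|I_h\|_{Y_h}<Rh$ for small $h$, and your continuation argument goes through unchanged.
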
 
\section{Consistency, stability and convergence}
In this section, we prove that numerical approximation \eqref{scheme} is consistent and stable. In order to prove the stability result, we first need to prove an elementary inequality. Finally, with the help of Theorem \ref{marcos_them}, we establish the convergence result. We begin with the consistency result in the following theorem.
\begin{theorem}[Consistency]\label{consistency}
	Assume that $d, B, \psi_i$, $i=1,2,$ are sufficiently smooth  such that the solution $u$ to \eqref{e1} is thrice continuously  differentiable  with bounded derivatives. Moreover, we assume  that there exists $L>0$ such that for every $0 \leq x \leq a_\dagger$, $s_1,s_2>0$,  $$|d(x,s_1)-d(x,s_2)| \leq L|s_1-s_2|,$$ and $$|B(x,s_1)-B(x,s_2)| \leq L|s_1-s_2|.$$ Then the  local discretization error satisfies
	$$||\Phi_h(u_h)||_{Y_h}=\{||\boldsymbol{U}^0-\boldsymbol{u}^0||^2+\mathcal{O}(h^2)+\mathcal{O}(k^2)\}^{1/2} , \ as\ h \to 0.$$
\end{theorem}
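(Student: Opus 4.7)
\smallskip

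\noindent\textbf{Proof plan.} The approach is to evaluate each of the four kinds of components of $\Phi_h(\boldsymbol{u}_h)$ separately --- namely $P_0^n$ (boundary residual at $x=0$), $P_M^n$ (boundary residual at $x=a_\dagger$), $P_i^0$ (initial-data residual), and $P_i^n$ for $n\geq 1$ (interior PDE residual) --- bound each in the relevant seminorm, and then combine them via the definition of $\|\cdot\|_{Y_h}$. The only non-vanishing initial contribution is $\|\boldsymbol{u}^0-\boldsymbol{U}^0\|^2$, which appears verbatim in the claim, and $P_M^n=u(a_\dagger,t^n)/h=0$ by the Dirichlet boundary condition, so the real work is on $P_i^n$ and $P_0^n$. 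A preliminary ingredient I would record once and reuse is the fact that for any sufficiently smooth function $\varphi$ on $[0,a_\dagger]$ the composite quadrature $\mathcal{Q}_h$ satisfies
\begin{equation*}
\Big|\mathcal{Q}_h\bigl((\varphi(x_1),\dots,\varphi(x_{M-1}))\bigr)-\int_0^{a_\dagger}\varphi(x)\,dx\Big|=\mathcal{O}(h^2);
\end{equation*}
this follows by recognising the middle sum as composite Simpson and checking (by exactness on polynomials up to degree $3$) that the two boundary correction panels $\tfrac{4h}{3}(2V_1-V_2+2V_3)$ and its mirror image exactly integrate cubics over $[0,4h]$ and $[a_\dagger-4h,a_\dagger]$.

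\smallskip
\noindent\textbf{Interior residual.} For $1\leq i\leq M-1$ and $1\leq n\leq N$, I would Taylor expand each finite difference about $(x_i,t^{n-1})$: the forward difference in $t$ produces $u_t+\mathcal O(k)$, the backward difference in $x$ gives $u_x+\mathcal O(h)$, and the centred second difference yields $u_{xx}+\mathcal O(h^2)$. For the nonlinear reaction term I would write
\begin{equation*}
d_i\!\bigl(\mathcal{Q}_h(\boldsymbol{\Psi}_1\!\cdot\!\boldsymbol{u}^{n-1})\bigr)\,u_i^{n-1}-d(x_i,s_1(t^{n-1}))\,u(x_i,t^{n-1})
\end{equation*}
and use the Lipschitz assumption on $d$ together with the quadrature estimate to bound this by $L\|u\|_\infty\,|\mathcal{Q}_h(\boldsymbol{\Psi}_1\!\cdot\!\boldsymbol{u}^{n-1})-s_1(t^{n-1})|=\mathcal{O}(h^2)$. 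Subtracting from the PDE evaluated at $(x_i,t^{n-1})$ yields $|P_i^n|\leq C(h+k)$ uniformly in $i,n$. Since $\|\boldsymbol{P}^n\|^2=\sum_{i}h|P_i^n|^2\leq Ca_\dagger(h^2+k^2)$ and $\sum_{n=1}^{N}k\cdot \|\boldsymbol{P}^n\|^2\leq CT\,a_\dagger(h^2+k^2)$, the whole interior contribution is $\mathcal O(h^2)+\mathcal O(k^2)$.

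\smallskip
\noindent\textbf{Boundary residual at $x=0$.} Writing $(1+1/h)u_0^n-(1/h)u_1^n=u(0,t^n)-\tfrac{u(h,t^n)-u(0,t^n)}{h}$ and Taylor expanding gives $u(0,t^n)-u_x(0,t^n)+\mathcal O(h)$. For the nonlinear nonlocal piece I would introduce $\tilde{s}_2^n:=\mathcal{Q}_h(\boldsymbol{\Psi}_2\!\cdot\!\boldsymbol{u}^n)$, use the quadrature estimate to get $|\tilde{s}_2^n-s_2(t^n)|=\mathcal{O}(h^2)$, apply the Lipschitz bound on $B(\cdot,s)$ in the second argument node-wise to replace $\tilde s_2^n$ by $s_2(t^n)$ at cost $\mathcal{O}(h^2)$, and then apply the quadrature estimate one more time to compare $\mathcal{Q}_h\bigl(\boldsymbol{B}(s_2(t^n))\!\cdot\!\boldsymbol{u}^n\bigr)$ with $\int_0^{a_\dagger}B(x,s_2(t^n))u(x,t^n)\,dx$. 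Invoking the exact Robin boundary condition of \eqref{e1} then yields $|P_0^n|=\mathcal{O}(h)$, whence $\|\boldsymbol{P}_0\|_*^2=\sum_{n}k|P_0^n|^2\leq CTh^2=\mathcal{O}(h^2)$.

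\smallskip
\noindent\textbf{Main obstacle.} The routine Taylor expansions are standard; the genuinely delicate point is the double-nested nonlocality in $P_0^n$, namely the fact that the outer quadrature in $\mathcal{Q}_h(\boldsymbol{B}(\mathcal{Q}_h(\boldsymbol{\Psi}_2\!\cdot\!\boldsymbol{u}^n))\!\cdot\!\boldsymbol{u}^n)$ has an integrand depending on another approximated integral. Handling this cleanly requires the quadrature estimate for $\mathcal{Q}_h$ at the $\mathcal{O}(h^2)$ level, which in turn depends on the slightly non-standard boundary formulae in $\mathcal{Q}_h$ being tuned so that cubic exactness survives at both endpoints; isolating $\tilde s_2^n$ as above and peeling off the Lipschitz error in $B$ before invoking the quadrature bound is how I would keep the argument modular. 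Combining all four pieces through $\|\cdot\|_{Y_h}$ gives $\|\Phi_h(\boldsymbol{u}_h)\|_{Y_h}^2=\|\boldsymbol{u}^0-\boldsymbol{U}^0\|^2+\mathcal{O}(h^2)+\mathcal{O}(k^2)$, which is the stated bound.
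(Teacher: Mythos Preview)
Your proposal is correct and follows essentially the same approach as the paper: Taylor-expand the finite differences, control the nonlinear nonlocal terms via the Lipschitz hypotheses on $d$ and $B$ combined with a quadrature error bound for $\mathcal{Q}_h$, and assemble the pieces in the $Y_h$-norm. One minor remark: the cubic exactness you verify for the boundary panels (and which composite Simpson already enjoys) actually yields $\mathcal{O}(h^4)$ accuracy for $\mathcal{Q}_h$, not just $\mathcal{O}(h^2)$ --- this is what the paper uses --- but since the dominant consistency error comes from the $\mathcal{O}(h)$ backward difference and the $\mathcal{O}(h)$ Robin discretization, your weaker claim is harmless for the final bound.
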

\begin{proof}Using the notation introduced in \eqref{adisc}, it is standard to verify that
	\begin{equation}\label{order_k}
		\sup\limits_{i,n} \left|\frac{u^{n}_i-u^{n-1}_i}{k} - u_t(x_i,t_{n-1})\right|=\mathcal{O}(k),\ \text{as}\ k \to 0,
	\end{equation} 
\begin{equation}\label{order_h}
		\sup\limits_{i,n}\left|\frac{u^{n-1}_i-u^{n-1}_{i-1}}{h} - u_x(x_i,t_{n-1})\right|=\mathcal{O}(h),\ \text{as}\ h \to 0,
\end{equation}
and
\begin{equation}\label{order_h2}
	\sup\limits_{i,n}\left|\frac{u^{n-1}_{i+1}+u^{n-1}_{i-1}-2u^{n-1}_{i}}{h^2} - u_{xx}(x_i,t_{n-1})\right|=\mathcal{O}(h^2),\ \text{as}\ h \to 0.
\end{equation}
	On the other hand, it is well known that if $f \in \mathcal{C}^1 [0,a_\dagger]$, then 
	\begin{equation}
		\left|\int_{0}^{a_\dagger} f(x)dx-\mathcal{Q}_h(\boldsymbol{f})\right|\leq Ch^4,
	\end{equation}
	where $C>0$ is independent of $h$.\\
	Lipschitz continuity of $d$ on compact sets readily implies
	\begin{equation}
		\begin{aligned}
             |d_i\Big(s_1(t_n)\Big)u^{n-1}_i-d_i\Big(\mathcal{Q}_h(\boldsymbol{\psi}_1 \cdot \boldsymbol{u}^{n-1})\Big)u^{n-1}_i|
              & \leq L |u^{n-1}_i| |s_1(t_n)- \mathcal{Q}_h(\boldsymbol{\psi}_1\boldsymbol{u}^{n-1})| 
              \\
              &\leq L C h^4 |u^{n-1}_i|.
		\end{aligned}
	\end{equation}
Hence we get 
	\begin{equation}\label{order_d}
	\begin{aligned}
		\sup \limits_{i,n}\left|d_i\Big(s_1(t_n)\Big)u^{n-1}_i-d_i\Big(\mathcal{Q}_h(\boldsymbol{\psi}_1 \cdot \boldsymbol{u}^{n-1})\Big)u^{n-1}_i\right|
		= \mathcal{O}(h^4),\ \text{as}\ h \to 0.
	\end{aligned}
\end{equation}
From the boundary condition, it follows that
\begin{equation}
	\begin{aligned}
		&\left| \int_0^{a_\dagger} B\Big(x,s_2(t_n)\Big)u(x,t_n)dx -\mathcal{Q}_h\bigg(\boldsymbol{B}\Big(\mathcal{Q}_h(\boldsymbol{\psi}_2 \cdot \boldsymbol{u}^n)\Big) \cdot \boldsymbol{u}^n\bigg)\right|
		\\
			&\leq \left| \int_0^{a_\dagger} B\Big(x,s_2(t_n)\Big)u(x,t_n)dx-\mathcal{Q}_h\bigg(\boldsymbol{B}\Big(s_2(t_n)\Big) \cdot \boldsymbol{u}^n\bigg) \right|
			\\
			&\quad+\left|\mathcal{Q}_h\bigg(\boldsymbol{B}\Big(s_2(t_n)\Big) \cdot \boldsymbol{u}^n\bigg)-\mathcal{Q}_h\bigg(\boldsymbol{B}\Big(\mathcal{Q}_h(\boldsymbol{\psi}_2 \cdot \boldsymbol{u}^n)\Big) \cdot \boldsymbol{u}^n\bigg)\right|
			\\
			& \leq Ch^4+ |\mathcal{Q}_h\Big(\left|s_2(t_n)-\mathcal{Q}_h(\boldsymbol{\Psi}_2\cdot \boldsymbol{u}^n)\right|L\boldsymbol{u}^n\Big)|
			\\
			& \leq Ch^4+LCh^4 a_\dagger||\boldsymbol{u}^n||_\infty.
	\end{aligned}
\end{equation}
Therefore we can write
\begin{equation}\label{order_int}
	\begin{aligned}
		\sup \limits_{n} \left| \int_0^{a_\dagger} B\Big(x,s_2(t_n)\Big)u(x,t_n)dx -\mathcal{Q}_h\bigg(\boldsymbol{B}\Big(\mathcal{Q}_h(\boldsymbol{\psi}_2 \cdot \boldsymbol{u}^n)\Big) \cdot \boldsymbol{u}^n\bigg)\right|
		= \mathcal{O}(h^4),\ \text{as}\ h \to 0.
	\end{aligned}
\end{equation}
Using \eqref{order_k}, \eqref{order_h}, \eqref{order_h2}, \eqref{order_d} and \eqref{order_int}, one can easily conclude the proof of the required result. 
\end{proof}
To prove  numerical scheme \eqref{scheme} is stable, we need the following lemma.
\begin{lemma}\label{l1}
	If $x$, $y$, $a$, $b$ and $h$ be five positive real numbers such that $\left(1+\frac{1}{h}\right)x-\frac{1}{h}y\leq a+b,$  then  $\left(1+\frac{1}{h}\right)x^2-\frac{1}{h}y^2\leq 2(a^2+b^2)$. 
\end{lemma}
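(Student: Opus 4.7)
The plan is to exploit the relation $\alpha - \beta = 1$ between the two coefficients $\alpha = 1 + \tfrac{1}{h}$ and $\beta = \tfrac{1}{h}$, so the hypothesis reads $\alpha x - \beta y \leq a+b$ and the conclusion reads $\alpha x^2 - \beta y^2 \leq 2(a^2+b^2)$. The cornerstone is the algebraic identity
\begin{equation*}
	\alpha x^2 - \beta y^2 = (\alpha x - \beta y)^2 - \alpha\beta(x-y)^2,
\end{equation*}
which I would verify by expanding $(\alpha x - \beta y)^2 = \alpha^2 x^2 - 2\alpha\beta xy + \beta^2 y^2$ and substituting $\alpha^2 - \alpha = \alpha(\alpha-1) = \alpha\beta$ and $\beta^2 + \beta = \beta(\beta+1) = \alpha\beta$. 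In particular, the identity yields $\alpha x^2 - \beta y^2 \leq (\alpha x - \beta y)^2$ for free, since $\alpha\beta(x-y)^2 \geq 0$.

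From here I would split on the sign of $\alpha x - \beta y$. If $\alpha x - \beta y \geq 0$, the hypothesis gives $0 \leq \alpha x - \beta y \leq a+b$, and combining the inequality displayed above with the elementary bound $(a+b)^2 \leq 2(a^2+b^2)$ produces
\begin{equation*}
	\alpha x^2 - \beta y^2 \leq (\alpha x - \beta y)^2 \leq (a+b)^2 \leq 2(a^2+b^2),
\end{equation*}
as required.

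The case $\alpha x - \beta y < 0$ is where the positivity hypothesis on $x$ and $y$ becomes essential, since the identity alone gives no useful upper bound on $(\alpha x - \beta y)^2$ in terms of $a+b$ when $\alpha x - \beta y$ is negative. In this case $\alpha x < \beta y$ together with $\alpha > \beta > 0$ forces $x < y$, and hence
\begin{equation*}
	\alpha x^2 = x \cdot \alpha x < x \cdot \beta y < y \cdot \beta y = \beta y^2,
\end{equation*}
so $\alpha x^2 - \beta y^2 < 0 \leq 2(a^2+b^2)$ trivially. The main obstacle, such as it is, is recognising that a case split on the sign of $\alpha x - \beta y$ is required, and noting precisely where the positivity of $x$ and $y$ enters; the remainder of the argument is a short calculation.
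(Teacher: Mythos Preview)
Your proof is correct and takes a genuinely different route from the paper's. The paper's argument is computational: from the hypothesis it isolates $x\le \frac{h(a+b)+y}{h+1}$, substitutes this upper bound into $\bigl(1+\tfrac1h\bigr)x^2$, expands the square, and then uses two AM--GM estimates (on $2abh^2$ and on $2yh(a+b)$) so that the $y^2$ contributions cancel exactly. Your approach instead exploits the structural fact $\alpha-\beta=1$ through the identity $\alpha x^2-\beta y^2=(\alpha x-\beta y)^2-\alpha\beta(x-y)^2$, which immediately yields the bound in the nonnegative case and forces a short sign argument in the other. What your method buys is transparency---one sees at once that the entire lemma rests on $\alpha-\beta=1$ and on $(a+b)^2\le 2(a^2+b^2)$---at the cost of a case split; the paper's method avoids the split but hides the mechanism inside an expansion.
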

\begin{proof}
	Consider
	\begin{equation*}
		\begin{aligned}
			\left(1+\frac{1}{h}\right)x^2-\frac{1}{h}y^2&\leq \left(1+\frac{1}{h}\right)\left(\frac{ah+bh+y}{h+1}\right)^2-\frac{1}{h}y^2\\
			&=\frac{a^2h^2+b^2h^2+2abh^2+y^2+2yh(a+b)}{h(h+1)}-\frac{1}{h}y^2\\
			&\leq \frac{2h(h+1)(a^2+b^2)+(h+1)y^2}{h(h+1)}-\frac{1}{h}y^2\\
			&=2(a^2+b^2).
		\end{aligned}
	\end{equation*}
	This completes the proof of the Lemma.
\end{proof}
\noindent Now, we are ready to establish the following stability theorem.
\begin{theorem}[Stability]\label{stability}
	Assume the hypotheses of Theorem \ref{consistency}. Let $r$ and $\lambda$ be such that  $k=rh^2=\lambda h$ with $\lambda+2r\leq1$. Then the discretization \eqref{equation} is stable with thresholds $R_h=Rh$, where $R$ is a fixed positive constant. 
\end{theorem}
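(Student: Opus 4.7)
The plan is to bound $\boldsymbol{Z}_h := \boldsymbol{V}_h - \boldsymbol{W}_h$ in $X_h$ by $\boldsymbol{Q}_h := \Phi_h(\boldsymbol{V}_h) - \Phi_h(\boldsymbol{W}_h)$ in $Y_h$, treating the initial, Dirichlet, Robin and interior blocks separately. The two trivial pieces come first: since $P_i^0 = V_i^0 - U_i^0$, we have $Z_i^0 = Q_i^0$ and hence $\|\boldsymbol{Z}^0\| = \|\boldsymbol{Q}^0\|$; since $P_M^n = V_M^n/h$, we get $Z_M^n = hQ_M^n$, so $h\|\boldsymbol{Z}_M\|_* \leq h^{3/2}\|\boldsymbol{Q}_h\|_{Y_h}$. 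Before the main estimates I would also observe that membership in $B(\boldsymbol{u}_h,Rh)$ forces $\|V^n - u^n\|_\infty \leq h^{-1/2}\|V^n - u^n\| \leq Rh^{1/2}$ by the standard inverse inequality, so $\boldsymbol{V}$, $\boldsymbol{W}$ and the quadratures $\mathcal{Q}_h(\boldsymbol{\Psi}_i \cdot \boldsymbol{V}^n)$ enjoy uniform $L^\infty$ bounds for $h$ small.

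For the Robin equation, subtracting the $V$ and $W$ versions and expanding the quadrature difference via the standard $B(X)\boldsymbol{V} - B(Y)\boldsymbol{W} = (B(X)-B(Y))\boldsymbol{V} + B(Y)(\boldsymbol{V}-\boldsymbol{W})$ splitting, together with the Lipschitz assumption on $B$ and the uniform bounds, gives a linear inequality of the form
\begin{equation*}
\bigl(1+\tfrac{1}{h}\bigr)|Z_0^n| - \tfrac{1}{h}|Z_1^n| \leq |Q_0^n| + C\|\boldsymbol{Z}^n\|.
\end{equation*}
At this point Lemma \ref{l1} is decisive: applied with $a=|Q_0^n|$ and $b=C\|\boldsymbol{Z}^n\|$ it converts the linear inequality into its quadratic counterpart, which combined with $(Z_1^n)^2 \leq h^{-1}\|\boldsymbol{Z}^n\|^2$ produces the usable bound
\begin{equation*}
h(Z_0^n)^2 \leq 2h^2(Q_0^n)^2 + C\|\boldsymbol{Z}^n\|^2.
\end{equation*}

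For the interior I would rewrite the difference equation as
\begin{equation*}
Z_i^n = (1-\lambda-2r)Z_i^{n-1} + (\lambda+r)Z_{i-1}^{n-1} + rZ_{i+1}^{n-1} - k\mathcal{D}_i^{n-1} + kQ_i^n,
\end{equation*}
where $\mathcal{D}_i^{n-1}$, the difference of the nonlinear mortality terms, obeys $h\sum_i(\mathcal{D}_i^{n-1})^2 \leq C\|\boldsymbol{Z}^{n-1}\|^2$ by the Lipschitz condition on $d$. The CFL hypothesis $\lambda + 2r \leq 1$ makes the first three coefficients a convex combination, so Jensen together with Young produces an $\ell^2$ estimate that, after multiplication by $h$, summation over $i$ and shifting of indices (which leaves the edge terms $h(Z_0^{n-1})^2$ and $h(Z_M^{n-1})^2$), takes the form
\begin{equation*}
\|\boldsymbol{Z}^n\|^2 \leq (1+Ck)\bigl[\|\boldsymbol{Z}^{n-1}\|^2 + h(Z_0^{n-1})^2 + h(Z_M^{n-1})^2\bigr] + Ck\|\boldsymbol{Q}^n\|^2.
\end{equation*}
Substituting the quadratic boundary inequality at level $n-1$ and the Dirichlet identity $Z_M^{n-1}=hQ_M^{n-1}$ closes the recursion; discrete Gronwall then delivers $\max_n\|\boldsymbol{Z}^n\| \leq C\|\boldsymbol{Q}_h\|_{Y_h}$, and reinserting this in the $Z_0$ bound and summing in time gives $h\|\boldsymbol{Z}_0\|_* \leq C\|\boldsymbol{Q}_h\|_{Y_h}$, which completes $\|\boldsymbol{Z}_h\|_{X_h} \leq S\|\boldsymbol{Q}_h\|_{Y_h}$.

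The decisive obstacle is the Robin coefficient $(1+1/h)$, which blows up as $h\to 0$: any naive estimate of $Z_0^n$ in terms of $Z_1^n$ loses a factor $1/h$ and is useless, and Lemma \ref{l1} is precisely the tool that turns this linear relation into a quadratic inequality compatible with the $L^2$ energy method. A secondary subtlety is keeping track of the edge contributions $h(Z_0^{n-1})^2$ and $h(Z_M^{n-1})^2$ that surface from the index shifts, and ensuring they are absorbed by the boundary bound and the Dirichlet relation rather than spoiling the Gronwall iteration; the choice $R_h=Rh$ is what keeps the nonlinear Lipschitz constants uniform in $h$ throughout this absorption.
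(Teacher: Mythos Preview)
Your outline is close to the paper's argument, but there is a genuine gap in the way you feed the Robin bound back into the interior recursion, and with it the Gronwall step does not close.

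When you shift indices after Jensen, the edge contribution at $x=0$ is not merely $h(Z_0^{n-1})^2$: the shift in the $rZ_{i+1}^{n-1}$ sum also produces a \emph{negative} term $-\,r\,h(Z_1^{n-1})^2$. Thus the true edge remainder is
\[
(\lambda+r)\,h(Z_0^{n-1})^2 - r\,h(Z_1^{n-1})^2
\;=\; k\Bigl[(1+\tfrac{1}{h})(Z_0^{n-1})^2 - \tfrac{1}{h}(Z_1^{n-1})^2\Bigr],
\]
and the prefactor $k$ here is essential. If you discard the negative piece, the surviving edge term $h(Z_0^{n-1})^2$ carries an $O(1)$ coefficient (namely $\lambda+r\le 1$), and substituting your ``usable bound'' $h(Z_0^{n-1})^2 \le C\|\boldsymbol{Z}^{n-1}\|^2 + 2h^2(Q_0^{n-1})^2$ turns the recursion into
\[
\|\boldsymbol{Z}^n\|^2 \le (1+C')\,\|\boldsymbol{Z}^{n-1}\|^2 + \cdots
\]
with $C'=O(1)$ independent of $k$. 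Iterating $N\sim T/k$ times then gives a factor $(1+C')^{T/k}\to\infty$, so discrete Gronwall fails to yield a stability constant independent of $h$.

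The fix, which is exactly what the paper does, is to keep the full combination and apply Lemma~\ref{l1} to it directly:
\[
(1+\tfrac{1}{h})(Z_0^{n-1})^2 - \tfrac{1}{h}(Z_1^{n-1})^2 \;\le\; 2\bigl(|Q_0^{n-1}|^2 + C\|\boldsymbol{Z}^{n-1}\|^2\bigr),
\]
so that the edge contributes $k\cdot O\bigl(\|\boldsymbol{Z}^{n-1}\|^2 + |Q_0^{n-1}|^2\bigr)$ to the recursion, with the crucial $k$ factor intact. In other words, Lemma~\ref{l1} should not be used to isolate $h(Z_0^n)^2$ (which forces you to reinsert $(Z_1^n)^2\le h^{-1}\|\boldsymbol{Z}^n\|^2$ and loses the cancellation); it should be applied to the very combination that the energy identity produces. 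With that one change, your plan coincides with the paper's proof.
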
 
\begin{proof}
	Assume that $\boldsymbol{u}_h \in X_h$ is the discrete representation of $u$ given in \eqref{suitable}--\eqref{adisc}.	Suppose $\boldsymbol{V}_h$, $\boldsymbol{W}_h$ belong to the ball $B(\boldsymbol{u}_h, L_h)$. We set
	$$\boldsymbol{V}_h = (\boldsymbol{V_0}, \boldsymbol{V^0}, \boldsymbol{V^1},...,\boldsymbol{V^N},\boldsymbol{V_M}), \quad \Phi(\boldsymbol{V}_h) = (\boldsymbol{P_0}, \boldsymbol{P^0}, \boldsymbol{P^1},...,\boldsymbol{P^N},\boldsymbol{P_M}),$$
	$$\boldsymbol{W}_h = (\boldsymbol{W_0}, \boldsymbol{W^0}, \boldsymbol{W^1},...,\boldsymbol{W^N},\boldsymbol{W_M}), \quad \Phi(\boldsymbol{W}_h) = (\boldsymbol{R_0}, \boldsymbol{R^0}, \boldsymbol{R^1},...,\boldsymbol{R^N},\boldsymbol{R_M}).$$
	Then from the definition of the norm  in  $X_h$, we find that
	\begin{equation*}
		\begin{aligned}
			Rh &\geq ||\boldsymbol{V}_h- \boldsymbol{u}_h||_{X_h}
			\\
			&= h(||\boldsymbol{V}_0- \boldsymbol{u}_0||_*+||\boldsymbol{V}_M- \boldsymbol{u}_M||_*)+\max\limits_{0 \leq n \leq N} \{||\boldsymbol{V}^n-\boldsymbol{u}^n||\} 
			\\
			& \geq \left( \sum\limits_{i=1}^{M-1} h|{V}_i^n-{u}_i^n|^2 \right)^{\frac{1}{2}},
		\end{aligned}
	\end{equation*}
or 
\begin{equation*}
	R\sqrt{h}  \geq |{V}_i^n-{u}_i^n|, \ 0\leq n \leq N,\ 1\leq i \leq M-1.
\end{equation*}
This readily implies
\begin{equation*}
	R\sqrt{h} \geq  ||\boldsymbol{V}^n- \boldsymbol{u}^n ||_\infty, \ 0\leq n \leq N.
\end{equation*}
Therefore there exists $C>0$, independent of $n$, such that 
	\begin{equation}\label{w_bound}
	\begin{aligned}
		|| \boldsymbol{V}^n ||_\infty & \leq R\sqrt{h}  +|| \boldsymbol{u}^n ||_\infty 
		\leq C, \ 0\leq n \leq N.
	\end{aligned}
\end{equation}
On the other hand, from the definition of $\Phi_h$, we obtain
	\begin{equation}
		\begin{aligned}
			V_i^n- W_i^n=&(1-\lambda-2r)(V_i^{n-1}- W_i^{n-1})+(r+\lambda)(V_{i-1}^{n-1}- W_{i-1}^{n-1})+r(V_{i+1}^{n-1}- W_{i+1}^{n-1})
			\\
			&+k(P_i^n-R_i^n)-k\big[d_i\Big(Q_h(\boldsymbol{{\Psi}}_1 \cdot \boldsymbol{V}^{n-1} )\Big)V_i^{n-1}-d_i\Big(Q_h(\boldsymbol{{\Psi}}_1\cdot \boldsymbol{W}^{n-1} )\Big)W_i^{n-1}\big],
		\end{aligned}
	\end{equation}
	for $1\leq n\leq N$, $1 \leq i \leq M-1$. Multiplication by $h(V_i^n-W_i^n)$, summation over $1\leq i \leq M-1$, and  the Cauchy-Schwarz inequality yield
	\begin{equation}\label{e10}
		\begin{aligned}
			||\boldsymbol{V}^n- \boldsymbol{W}^n||^2\leq &\frac{1}{2}||\boldsymbol{V}^{n-1}- \boldsymbol{W}^{n-1}||^2+(\frac{1}{2}+k)||\boldsymbol{V}^n-\boldsymbol{ W}^n||^2
			\\
			&+\frac{k}{2}||\boldsymbol{d}\Big(Q_h(\boldsymbol{{\Psi}}_1 \cdot \boldsymbol{V}^{n-1} )\Big)\boldsymbol{V}^{n-1}-\boldsymbol{d}\Big(Q_h(\boldsymbol{{\Psi}}_1 \cdot \boldsymbol{W}^{n-1} )\Big)\boldsymbol{W}^{n-1}||^2+\frac{k}{2}||\boldsymbol{P}^n-\boldsymbol{R}^n||^2
			\\
			&+\frac{k}{2}\left((1+\frac{1}{h})|V_0^{n-1}-W_0^{n-1}|^2-\frac{1}{h}|V_1^{n-1}-W_1^{n-1}|^2+\frac{1}{h}|V_M^{n-1}-W_M^{n-1}|^2\right),
		\end{aligned}
	\end{equation}
	where we have used the fact that $2r+\lambda\leq1$.
	Now consider
		\begin{align}
			&||\boldsymbol{d}\Big(Q_h(\boldsymbol{{\Psi}}_1 \cdot \boldsymbol{V}^{n-1} )\Big)\boldsymbol{V}^{n-1}-\boldsymbol{d}\Big(Q_h(\boldsymbol{{\Psi}}_1 \cdot \boldsymbol{W}^{n-1} )\Big)\boldsymbol{W}^{n-1}|| \nonumber
			\\
			 &\leq ||\boldsymbol{d}\Big(Q_h(\boldsymbol{{\Psi}}_1 \cdot \boldsymbol{V}^{n-1})\Big)||_{\infty}||\boldsymbol{V}^{n-1}- \boldsymbol{W}^{n-1}||+||\boldsymbol{W}^{n-1}||_{\infty}||\boldsymbol{d}\Big(Q_h(\boldsymbol{{\Psi}}_1 \cdot \boldsymbol{V}^{n-1})\Big)- \boldsymbol{d}\Big(Q_h(\boldsymbol{{\Psi}}_1 \cdot \boldsymbol{W}^{n-1})\Big)|| \nonumber
			 \\
			&\leq C||\boldsymbol{V}^{n-1}- \boldsymbol{W}^{n-1}||, \label{d_bound}
		\end{align}
for some $C>0$ independent of $h,k$. Thus  \eqref{e10}--\eqref{d_bound} together give
	\begin{equation}\label{e11}
		\begin{aligned}
			(1-2k) ||\boldsymbol{V}^n- \boldsymbol{W}^n||^2\leq &(1+Ck)||\boldsymbol{V}^{n-1}- \boldsymbol{W}^{n-1}||^2+{k}||\boldsymbol{P}^n-\boldsymbol{R}^n||^2\\&
			+{k}\left((1+\frac{1}{h})|V_0^{n-1}-W_0^{n-1}|^2-\frac{1}{h}|V_1^{n-1}-W_1^{n-1}|^2+\frac{1}{h}|V_M^{n-1}-W_M^{n-1}|^2\right).
		\end{aligned}
	\end{equation}
	Using \eqref{w_bound} and the boundary condition, we can write
		\begin{align}
			(1+\frac{1}{h})|V_0^{n}-W_0^{n}|-\frac{1}{h}|V_1^{n}-W_1^{n}|  \nonumber
			\leq& |P_0^{n}-R_0^{n}|
			\\
			&+ |\mathcal{Q}_h\bigg(\boldsymbol{B}\Big(\mathcal{Q}_h(\boldsymbol{\Psi}_2 \cdot \boldsymbol{V}^n)\Big) \cdot \boldsymbol{V}^n\bigg)
			- \mathcal{Q}_h\bigg(\boldsymbol{B}\Big(\mathcal{Q}_h(\boldsymbol{\Psi}_2 \cdot \boldsymbol{W}^n)\Big) \cdot \boldsymbol{W}^n\bigg)| \nonumber
			\\
			\leq& |P_0^{n}-R_0^{n}|
			+ |\mathcal{Q}_h\bigg(\boldsymbol{B}\Big(\mathcal{Q}_h(\boldsymbol{\Psi}_2 \cdot \boldsymbol{V}^n)\Big) \cdot ( \boldsymbol{V}^n-\boldsymbol{W}^n)\bigg)| \nonumber
			\\
			&\quad \quad +| \mathcal{Q}_h\bigg(\Big(\boldsymbol{B}\big(\mathcal{Q}_h(\boldsymbol{\Psi}_2 \cdot \boldsymbol{V}^n)\big)-\boldsymbol{B}\big(\mathcal{Q}_h(\boldsymbol{\Psi}_2 \cdot \boldsymbol{W}^n)\big)\Big) \cdot \boldsymbol{W}^n\bigg)| \nonumber
			\\
			\leq &|P_0^{n}-R_0^{n}|
			+ ||\boldsymbol{B}||_\infty |\mathcal{Q}_h( \boldsymbol{V}^n-\boldsymbol{W}^n)| \nonumber
			\\
			&+L a_\dagger ||\boldsymbol{\Psi}_2||_\infty |\mathcal{Q}_h( \boldsymbol{V}^n-\boldsymbol{W}^n)|\ ||\boldsymbol{W}^n||_\infty \nonumber
			\\
			\leq & |P_0^{n}-R_0^{n}|
			+ C||\boldsymbol{V}^n- \boldsymbol{W}^n||, \label{bounday}
		\end{align}
	for some $C>0$ independent of mesh sizes $h$ and $k$. From \eqref{bounday} and Lemma \ref{l1}, we deduce that
	\begin{equation}
		\begin{aligned}
			(1+\frac{1}{h})|V_0^{n-1}-W_0^{n-1}|^2-\frac{1}{h}|V_1^{n-1}-W_1^{n-1}|^2 \leq C \left(||\boldsymbol{V}^{n-1}- \boldsymbol{W}^{n-1}||^2+|P_0^{n-1}-R_0^{n-1}|^2\right).
		\end{aligned}
	\end{equation}
	On substituting  this bound in \eqref{e11}, we obtain
	\begin{align}
		||\boldsymbol{V}^n- \boldsymbol{W}^n||^2\leq \frac{1+Ck}{1-2k}&||\boldsymbol{V}^{n-1}- \boldsymbol{W}^{n-1}||^2+\frac{Ck}{1-2k}\left(||\boldsymbol{P}^n-\boldsymbol{R}^n||^2\right. \nonumber
		\\
		&\left.+|P_0^{n-1}-R_0^{n-1}|^2+h|P_M^{n-1}-R_M^{n-1}|^2\right). \label{e12}
	\end{align}
	From the discrete Gronwall lemma, there exists $C_T$ depending solely on $T$ such that 
		\begin{align}
			||\boldsymbol{V}^n- \boldsymbol{W}^n||^2\leq C_T &\bigg\{||\boldsymbol{V}^0- \boldsymbol{W}^0||^2+\frac{Ck}{1-2k}\sum\limits_{m=1}^n\Big( ||\boldsymbol{P}^m-\boldsymbol{R}^m||^2 \Big.\bigg. \nonumber
			\\
			&\bigg. \Big.+|P_0^{m-1}-R_0^{m-1}|^2+h|P_M^{m-1}-R_M^{m-1}|^2\Big)\bigg\}. \label{gron}
		\end{align}
	Thus  for $k$ sufficiently small, this immediately gives
		\begin{align}
			||\boldsymbol{V}^n- \boldsymbol{W}^n||\leq C_T &\bigg\{||\boldsymbol{P}^0- \boldsymbol{R}^0||^2+C\left(\sum\limits_{m=1}^N k ||\boldsymbol{P}^m-\boldsymbol{R}^m||^2\right)\bigg. \nonumber
			\\
			&\bigg.+C||\boldsymbol{P}_0-\boldsymbol{R}_0||_*^2+h||\boldsymbol{P}_M-\boldsymbol{R}_M||_*^2\bigg\}^{\frac{1}{2}}. \label{part1}
		\end{align}
	Again, from \eqref{bounday}, we have
	\begin{equation*}
		\begin{aligned}
			(1+h)|V_0^{n}-W_0^{n}|-|V_1^{n}-W_1^{n} |\leq h\left( C||\boldsymbol{V}^{n}- \boldsymbol{W}^{n}||+|P_0^{n}-R_0^{n}|\right).
		\end{aligned}
	\end{equation*}
	Multiplying both sides with $|V_0^n- W_0^n|$ and using the AM-GM inequality,  we get
	\begin{equation}\label{d1}
		\begin{aligned}
			|V_0^{n}-W_0^{n}|^2 \leq |V_1^{n}-W_1^{n} |^2+ h\left( C||\boldsymbol{V}^{n}- \boldsymbol{W}^{n}||^2+|P_0^{n}-R_0^{n}|^2\right).
		\end{aligned}
	\end{equation}
	Multiplying both sides by $hk$, taking summation on $n$ yields	
		\begin{align}
			h||\boldsymbol{V}_0-\boldsymbol{W}_0||^2_* 
			&\leq \sum\limits_{n=0}^N hk |V_1^n -W_1^n |^2+  \sum\limits_{n=0}^N k h^2\left( C||\boldsymbol{V}^{n}- \boldsymbol{W}^{n}||^2+|P_0^{n}-R_0^{n}|^2\right) \nonumber
			\\
			& \leq (1+Ch^2)  \sum\limits_{n=0}^N k ||\boldsymbol{V}^n -\boldsymbol{W}^n ||^2+  h^2||\boldsymbol{P}_{0}- \boldsymbol{R}_{0}||_*^2. \label{part2}
		\end{align}
The second boundary condition immediately gives
\begin{equation}\label{part3}
	||\boldsymbol{V}_M-\boldsymbol{W}_M||_* = h ||\boldsymbol{P}_M-\boldsymbol{R}_M||_*.
\end{equation}
	From \eqref{part1}, \eqref{part2} and \eqref{part3}, we observe that
	\begin{equation*}\label{final}
		\begin{aligned}
			h\Big(||\boldsymbol{V}_0- \boldsymbol{W}_0||_*&+||\boldsymbol{V}_M-\boldsymbol{W}_M||_* \Big)+\max \Big\{||\boldsymbol{V}^0-\boldsymbol{W}^0||, ||\boldsymbol{V}^1-\boldsymbol{W}^1||,\dots, ||\boldsymbol{V}^N-\boldsymbol{W}^N||\Big\}  
			\\
			&\leq K\left( ||\boldsymbol{P}_0-\boldsymbol{R}_0||_*^2+||\boldsymbol{P}^0-\boldsymbol{R}^0||^2+h||\boldsymbol{P}_M-\boldsymbol{R}_M||_*^2+\sum\limits_{m=1}^N k ||\boldsymbol{P}^m-\boldsymbol{R}^m||^2\right)^{\frac{1}{2}},
		\end{aligned}
	\end{equation*}
	where $K$ is a constant.
	This completes the proof.
\end{proof}
In the following result, we establish that \eqref{scheme} is indeed a convergent scheme.
\begin{theorem}[Convergence]\label{convergence}
	Assume the hypotheses of Theorem \ref{consistency}. If $$ ||\boldsymbol{U}^0- \boldsymbol{u}^0||_{X_h}=\mathcal{O}(h),\ \text{as}\ h \to 0,$$ then the discretization \eqref{equation} is convergent. 
\end{theorem}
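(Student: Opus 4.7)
The plan is to invoke Theorem \ref{marcos_them} directly: most of the substantive work is already done in Theorems \ref{consistency} and \ref{stability}, and the role of this theorem is essentially to check that the pieces fit. Concretely, I need to verify four things with an appropriate choice of threshold $M_h$: (a) consistency, (b) stability restricted to $M_h$, (c) continuity of $\Phi_h$ on $B(\boldsymbol{u}_h,M_h)$, and (d) the matching condition $||\boldsymbol{I}_h||_{Y_h}=\mathcal{O}(M_h)$ as $h\to 0$. The natural choice is $M_h=R_h=Rh$, as provided by Theorem \ref{stability}.

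For (d), I would start from the consistency estimate of Theorem \ref{consistency},
\[
||\boldsymbol{I}_h||_{Y_h}=\{||\boldsymbol{U}^0-\boldsymbol{u}^0||^2+\mathcal{O}(h^2)+\mathcal{O}(k^2)\}^{1/2}.
\]
Using the relation $k=\lambda h$ imposed in Theorem \ref{stability}, the term $\mathcal{O}(k^2)$ collapses into $\mathcal{O}(h^2)$. Together with the standing hypothesis $||\boldsymbol{U}^0-\boldsymbol{u}^0||_{X_h}=\mathcal{O}(h)$, which gives $||\boldsymbol{U}^0-\boldsymbol{u}^0||^2=\mathcal{O}(h^2)$, this yields $||\boldsymbol{I}_h||_{Y_h}=\mathcal{O}(h)=\mathcal{O}(R_h)$, exactly as needed.

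For (c), I would simply observe that for each fixed $h$, the spaces $X_h$ and $Y_h$ are finite-dimensional, and each component of $\Phi_h$ is built from the entries of the input via finite sums, products, and compositions with the nonlinearities $d(x_i,\cdot)$ and $B(x_i,\cdot)$, which are continuous (indeed Lipschitz on bounded sets) by the assumptions of Theorem \ref{consistency}. Continuity of $\Phi_h$ on $B(\boldsymbol{u}_h,R_h)$ is then immediate. With (a) and (b) already established, Theorem \ref{marcos_them}(i) produces, for all sufficiently small $h$, a unique solution $\boldsymbol{U}_h\in B(\boldsymbol{u}_h,R_h)$ to $\Phi_h(\boldsymbol{U}_h)=\boldsymbol{0}$; part (ii) then gives $||\boldsymbol{e}_h||_{X_h}\to 0$ as $h\to 0$, with order of convergence no smaller than the order of consistency, i.e.\ $\mathcal{O}(h)$.

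The only delicate point — and really the raison d'être of the theorem — is the compatibility between the order of consistency and the stability threshold: Theorem \ref{marcos_them} requires $||\boldsymbol{I}_h||_{Y_h}=\mathcal{O}(R_h)=\mathcal{O}(h)$, which is precisely why the hypothesis on the initial data has to be imposed (without it, the $||\boldsymbol{U}^0-\boldsymbol{u}^0||^2$ term in the consistency estimate could spoil the matching). Once this compatibility is secured, the convergence conclusion is a direct application of the abstract Stetter--Marcos framework, and no further analytic work is needed.
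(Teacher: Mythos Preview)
Your proposal is correct and follows exactly the paper's approach: the paper's own proof consists of a single sentence stating that the result is an immediate consequence of Theorem \ref{marcos_them}, Theorem \ref{consistency}, and Theorem \ref{stability}. Your write-up simply spells out the verification of the hypotheses of Theorem \ref{marcos_them} (consistency, stability with threshold $R_h=Rh$, continuity of $\Phi_h$, and the matching $||\boldsymbol{I}_h||_{Y_h}=\mathcal{O}(R_h)$) that the paper leaves implicit.
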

\begin{proof}
	The proof is an immediate\textbf{} consequence of Theorem \ref{marcos_them}, Theorem \ref{consistency}, and  Theorem \ref{stability}.
\end{proof}
\subsection{Non-homogeneous  boundary condition at $x=a_\dagger$} \label{subsec:non_homo_bc}
In this subsection, we consider \eqref{e1} with non-homogeneous Dirichlet boundary condition i.e.,
\begin{equation}\label{non_e1}
	\left\{
	\begin{aligned}
		&u_t(x,t)+u_x(x,t)+d\big(x,s_1(t)\big)u(x,t)=u_{xx}(x,t), \hskip 1 em x \in (0,a_\dagger), \hskip .5 em t> 0,
		\\
		&u(0,t)-u_x(0,t)=\int_0^{a_\dagger}B\Big(x,s_2(t)\Big)u(x,t)dx, \hskip 1 em t \geq 0,
		\\
		&u({a_\dagger},t)=g(t), \ t\geq 0,
		\\
		&u(x,0)=u_0(x), \hskip 1 em x \in (0,a_\dagger),
		\\
		&s_i(t)=\int_0^{a_\dagger} \psi_i(x)u(x,t)dx, \hskip 1 em t \geq 0,
	\end{aligned}
	\right.
\end{equation}
where $g$ is a given smooth function. 
In order to discretize \eqref{non_e1}, we use the notations from the previous section. Let $h, \ k, \ T, \ M$ be as Section 2 and $U_i^n$ denote the approximate solution to \eqref{non_e1} at the grid point $(x_i,t^n)$. Moreover, we define $g^n= g(t^n)$, $0 \leq n \leq N$.

By discretizing \eqref{non_e1} as in Section 2, we arrive at the following finite difference scheme (see \eqref{scheme}) 
\begin{equation}\label{new_scheme}
	\left\{
	\begin{aligned}
		&\frac{U_i^{n+1}-U_i^n}{k}+\frac{U_{i}^n-U_{i-1}^n}{h}+d_i\bigg(\mathcal{Q}_h\Big(\boldsymbol{{\Psi}}_1 \cdot \boldsymbol{U}^n \Big)\bigg)U_i^n=\frac{U_{i+1}^n+U_{i-1}^n-2U_{i}^n}{h^2}, \quad 1 \leq i \leq M-1,\ 0 \leq n \leq N-1,
		\\
		& \left(1+\frac{1}{h}\right) U_0^n- \frac{1}{h}U_1^n=\mathcal{Q}_h\bigg(\boldsymbol{B}\Big(\mathcal{Q}_h(\boldsymbol{\Psi}_2 \cdot \boldsymbol{U}^n)\Big) \cdot \boldsymbol{U}^n\bigg), \quad 0 \leq n \leq N,
		\\
		&U_M^n=g^n,  \quad 0 \leq n \leq N,
		\\
		&U_i^0=u_0(x_i), 1 \leq i \leq M-1.
	\end{aligned}
	\right.
\end{equation}
As before, to carryout the analysis, we use the spaces $X_h$ and $ Y_h $ that are introduced in Section 2. Moreover,
we  consider the operator $ \widetilde{\Phi}_h  \colon X_h \to Y_h$, defined through the formulae
$$ \widetilde{\Phi}_h (\boldsymbol{V_0}, \boldsymbol{V^0}, \boldsymbol{V^1},...,\boldsymbol{V^N},\boldsymbol{V_M}) = (\boldsymbol{P_0}, \boldsymbol{P^0}, \boldsymbol{P^1},...,\boldsymbol{P^N},\boldsymbol{P_M}),$$
where
\begin{equation}
	\begin{aligned}
		&\boldsymbol{P}_0=(P_0^0,P_0^1, \cdots, P_0^N),
		\\
		&P_0^n=\left(1+\frac{1}{h}\right) V_0^n- \frac{1}{h}V_1^n-\mathcal{Q}_h\bigg(\boldsymbol{B}\Big(\mathcal{Q}_h(\boldsymbol{\Psi}_2 \cdot \boldsymbol{V}^n)\Big) \cdot \boldsymbol{V}^n\bigg), \quad 0\leq n \leq N,
		\\
		&\boldsymbol{P}_M=(P_M^0,P_M^1, \cdots, P_M^N),
		\\
		&P_M^n=\frac{V_M^n-g^n}{h}, \quad 0\leq n \leq N,
		\\
		&\boldsymbol{P}^n=(P_1^n,P_2^n, \dots,P_{M-1}^n), \quad 0\leq n \leq N,
		\\
		&P_i^0=V_i^0-U_i^0, \quad  1\leq i \leq M-1,
		\\
		&P_i^n= \frac{V_i^{n}-V_i^{n-1}}{k}+\frac{V_{i}^{n-1}-V_{i-1}^{n-1}}{h}+d_i\Big(\mathcal{Q}_h(\boldsymbol{{\Psi}}_1 \cdot \boldsymbol{V}^{n-1} )\Big)V_i^{n-1}
		\\
		&\quad \quad \quad-\frac{V_{i+1}^{n-1}+V_{i-1}^{n-1}-2V_{i}^{n-1}}{h^2}, \ 1\leq n \leq N, 1\leq i \leq M-1.
	\end{aligned}
\end{equation}
Using the definition of  $ \widetilde{\Phi}_h $, and the arguments used in Theorem \ref{consistency}, Theorem \ref{stability} and Theorem \ref{convergence}, one can easily show that \eqref{new_scheme} is indeed a convergent scheme whenever the hypotheses in Theorem \ref{convergence} hold.
\section{Numerical simulations}
In this section, we present some examples in which the numerical solutions to \eqref{e1} are computed using  \eqref{scheme}  to validate the results in the previous section. All the computations have been performed using  Matlab 8.5. In all the examples, we have taken $a_\dagger =1$, $\psi_1(x) \equiv \psi_2 (x) \equiv 1$ and $r=0.4$.  
	\begin{ex}\label{example1}
\end{ex}
\noindent In order to test the numerical scheme, assume that $u_0$, $d$,  and $B$  are given by\medskip \\ \centerline{$u_0(x)=e-e^x$, $d(x,s)=1$,\ $B(x,s)=e$, \ $x \in (0,1),\ s\geq 0$.}\medskip \\
Note that, for the given set of functions, $u(x)= (e-e^x)e^{-t}$ is a solution to \eqref{e1}. One can easily check that $d$ and $B$ satisfy hypotheses of Theorem \ref{convergence}. Hence \eqref{scheme} is a convergent  numerical scheme. \\
\begin{figure}[h!]
	\centering
	\begin{subfigure}[b]{0.49\linewidth}
		\includegraphics[width=\linewidth]{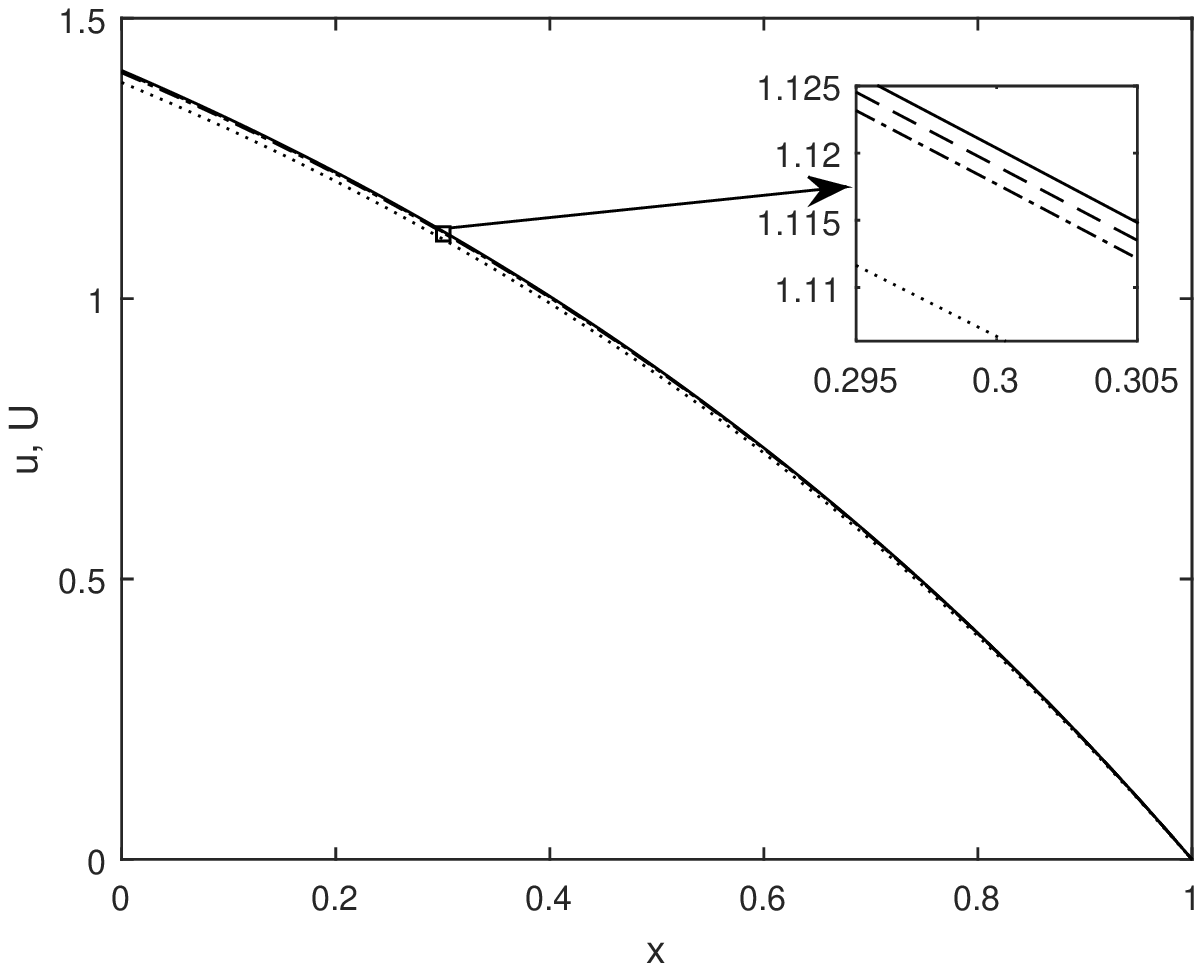}
		\label{fig:steady1}
	\end{subfigure}
	\begin{subfigure}[b]{0.49\linewidth}
		\includegraphics[width=\linewidth]{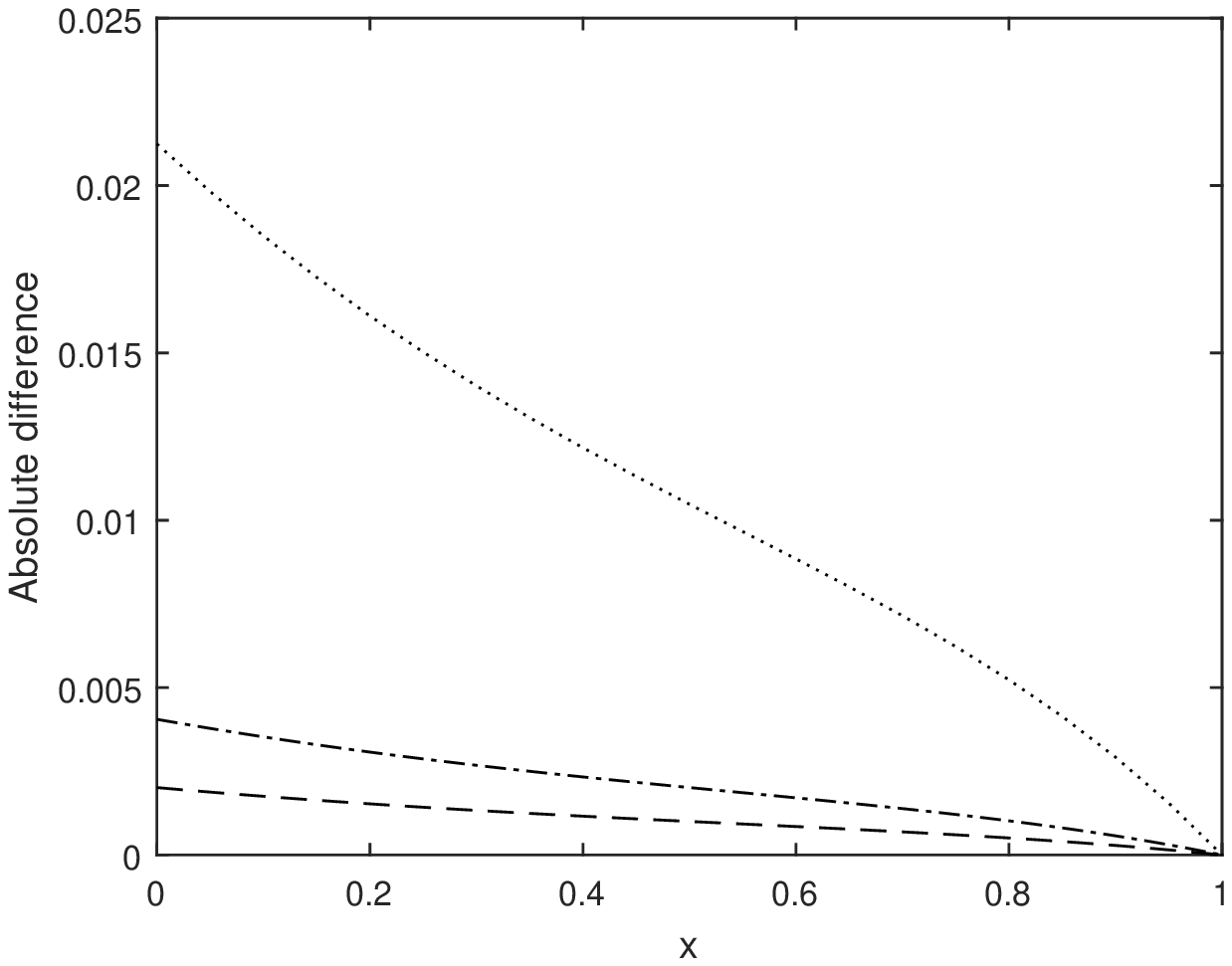}
		\label{fig:error1}
	\end{subfigure}
	\caption{The exact solution to \eqref{e1}, and the approximate solutions using \eqref{scheme}  with $d(x,s)$, $B(x,s)$ given in Example \ref{example1}; Left: $u(x,0.2)$ (solid line), $\boldsymbol{U}_{0.05}$ (dotted line), $\boldsymbol{U}_{0.01}$ (dash-dotted line), $\boldsymbol{U}_{0.005}$ (dashed line) for $0\leq x \leq 1$, Right: $\mid u(x,0.2)- \boldsymbol{U}_{0.05}\mid$ (dotted line), $\mid u(x,0.2)- \boldsymbol{U}_{0.01}\mid$ (dash-dotted line) and $\mid u(x,0.2)- \boldsymbol{U}_{0.005}\mid$ (dashed line).}
	\label{1fig1}
\end{figure}
In Figure \ref{1fig1}, we show the absolute difference between the exact solution and the computed solutions. 
In Figure \ref{1fig1} (left), we present the exact solution $u$ to \eqref{e1} and the corresponding numerical solutions to \eqref{scheme} with $h=0.05,\ 0.01, \ 0.005$ at $t=0.2$. From this figure, it is evident that $\boldsymbol{U}_{0.05}$, $\boldsymbol{U}_{0.01}$ and $\boldsymbol{U}_{0.005}$ are very close to $u$  at $t=0.2$. This phenomenon re-validates the results that are proved in Theorem \ref{convergence}.  In Figure \ref{1fig1} (right), the  difference between $u(x,0.2)$ and $\boldsymbol{U}_{h}$ at $t=0.2$, with $h=0.05,\ 0.01, \ 0.005$ are shown. From these figures, we can conclude that the sequence $\boldsymbol{U}_h$ indeed converges to the solution $u$ at $t=0.2$ as mentioned in Theorem \ref{convergence}.
	\begin{ex}\label{example2}
\end{ex}
\noindent In this example, we assume that $u_0$, $d$,  and $B$  are given by\medskip \\ \centerline{$u_0(x)=e-e^x$, $d(x,s)=\frac{1}{2}+\frac{s}{1-e^{-1}}$,\ $B(x,s)=2e^x$, \ $x \in (0,1),\ s\geq 0$.}\medskip \\
It is easy to verify that $d$ and $B$ satisfy hypotheses of Theorem \ref{convergence} . Therefore \eqref{scheme} is a convergent  numerical scheme. 
\begin{figure}[h!]
	\centering
	\begin{subfigure}[b]{0.49\linewidth}
		\includegraphics[width=\linewidth]{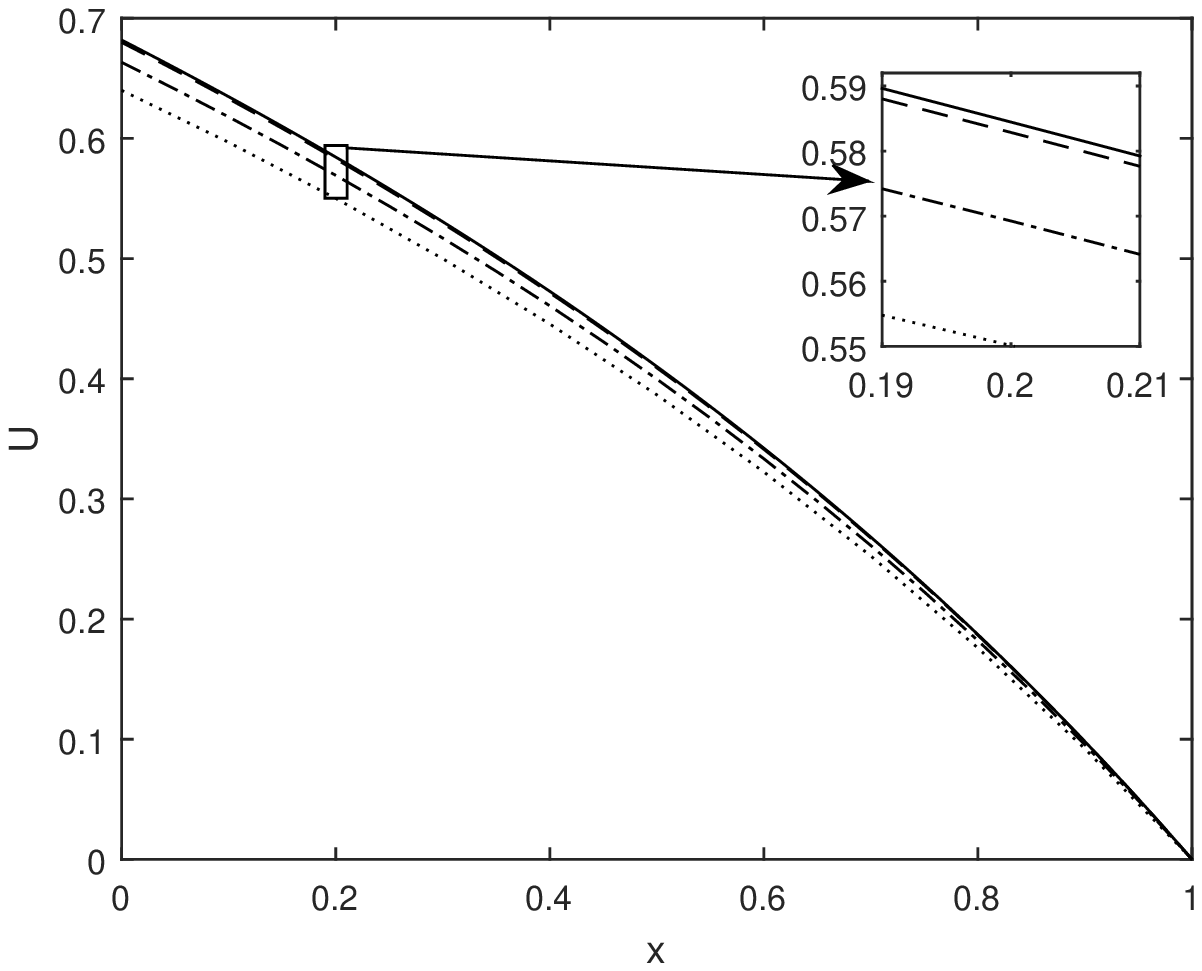}
		\label{fig:steady1}
	\end{subfigure}
	\begin{subfigure}[b]{0.49\linewidth}
		\includegraphics[width=\linewidth]{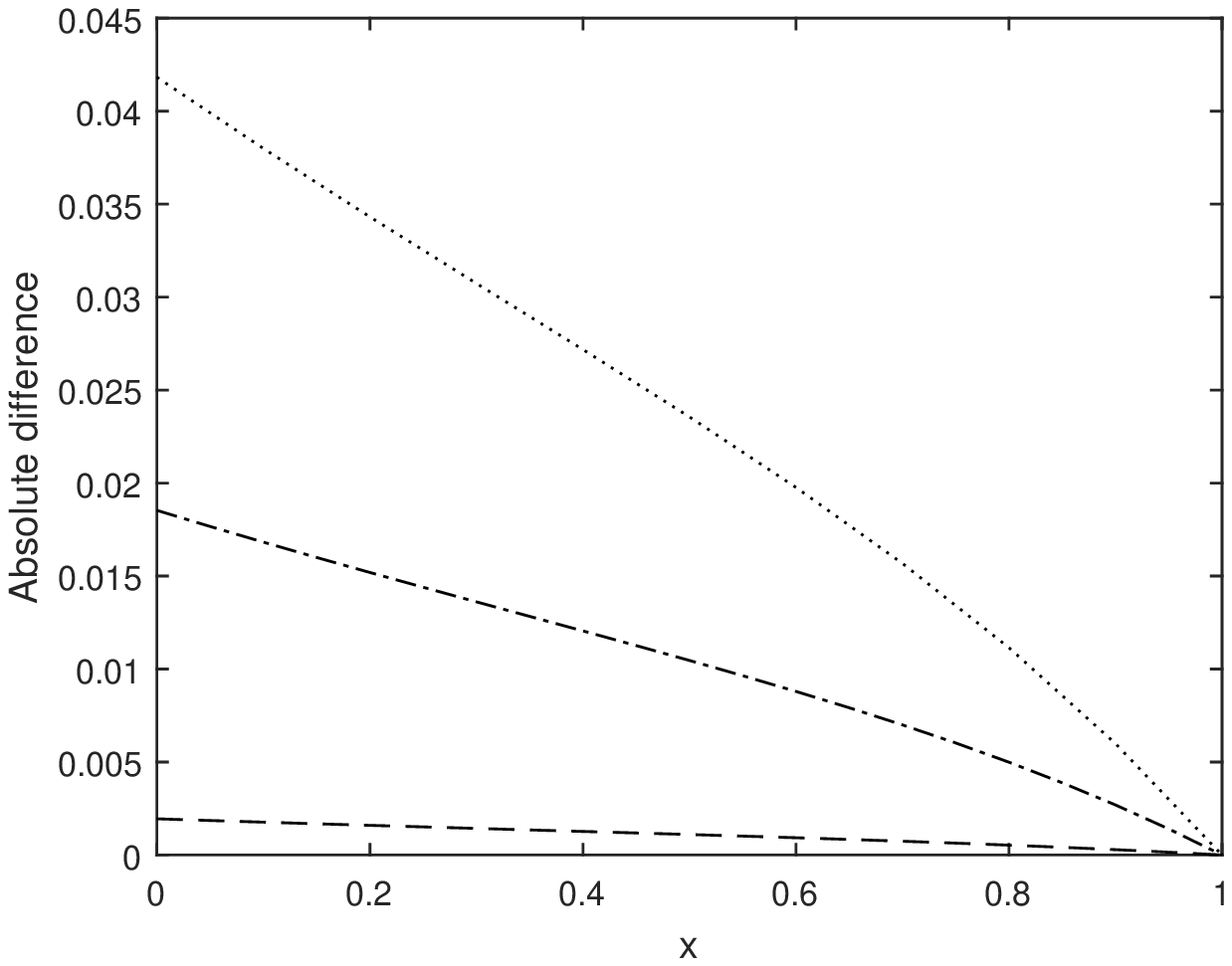}
		\label{fig:error1}
	\end{subfigure}
	\caption{The approximate solutions to \eqref{scheme}  with $d(x,s)$, $B(x,s)$ given in Example \ref{example2}; Left: $\boldsymbol{U}_{0.005}$ (solid line), $\boldsymbol{U}_{0.01}$ (dashed line),  $\boldsymbol{U}_{0.05}$ (dash-dotted line), $\boldsymbol{U}_{0.1}$ (dotted line) for $0\leq x \leq 1$, Right: $\mid \boldsymbol{U}_{0.005}- \boldsymbol{U}_{0.01}\mid$ (dashed line), $\mid \boldsymbol{U}_{0.005}- \boldsymbol{U}_{0.05}\mid$ (dash-dotted line), and $\mid \boldsymbol{U}_{0.005}- \boldsymbol{U}_{0.1}\mid$ (dotted line).}
	\label{1fig3}
\end{figure}
In Figure \ref{1fig3}, we plot the computed solutions for different values of $h$ at $t=0.8$. In Figure \ref{1fig3} (left), solution to \eqref{scheme} with $h=0.1,\ 0.05,\ 0.01,\ 0.005$ at $t=0.8$ are presented. From these figures, it is straightforward  to see that $\boldsymbol{U}_{0.005}$ is closer to $\boldsymbol{U}_{0.05}$ than $\boldsymbol{U}_{0.1}$ and $\boldsymbol{U}_{0.005}$ is closer to $\boldsymbol{U}_{0.01}$ than $\boldsymbol{U}_{0.05}$at $t=0.8$.  In Figure \ref{1fig3} (right), we plot absolute difference between $\boldsymbol{U}_{0.005}$ and $\boldsymbol{U}_{h}$ with $h=0.1,\  0.05,\ 0.01$  at $t=0.8$. From these graphs, one can observe that  the numerical solutions to \eqref{scheme} are converging.
	\begin{ex}\label{example3}
\end{ex}
\noindent In this example, we consider the  non-homogeneous case described in Subsection \ref{subsec:non_homo_bc}. In particular, we consider \eqref{non_e1} with $u_0$, $d,\ B$,  and $g$  are given by\medskip \\ \centerline{$u_0=\frac{e^{-x}}{2}$, $d(x,s)=1+\frac{s}{1-e^{-1}}$,\ $B(x,s)=2e^x$, \ $x \in (0,1),\ s\geq 0$, $g(t)=\frac{e^{-1}}{1+e^{-t}}$, $t>0$.}\medskip \\
We can observe  that $d$ and $B$ satisfy hypotheses of Theorem \ref{convergence}. Therefore  \eqref{new_scheme} is a convergent  numerical scheme. On the other hand, it is easy to check that  for the given set of functions, the function $u(x,t)=\frac{e^{-x}}{1+e^{-t}}$ is a solution to \eqref{non_e1}. 
\begin{figure}[h!]
	\centering
	\begin{subfigure}[b]{0.49\linewidth}
		\includegraphics[width=\linewidth]{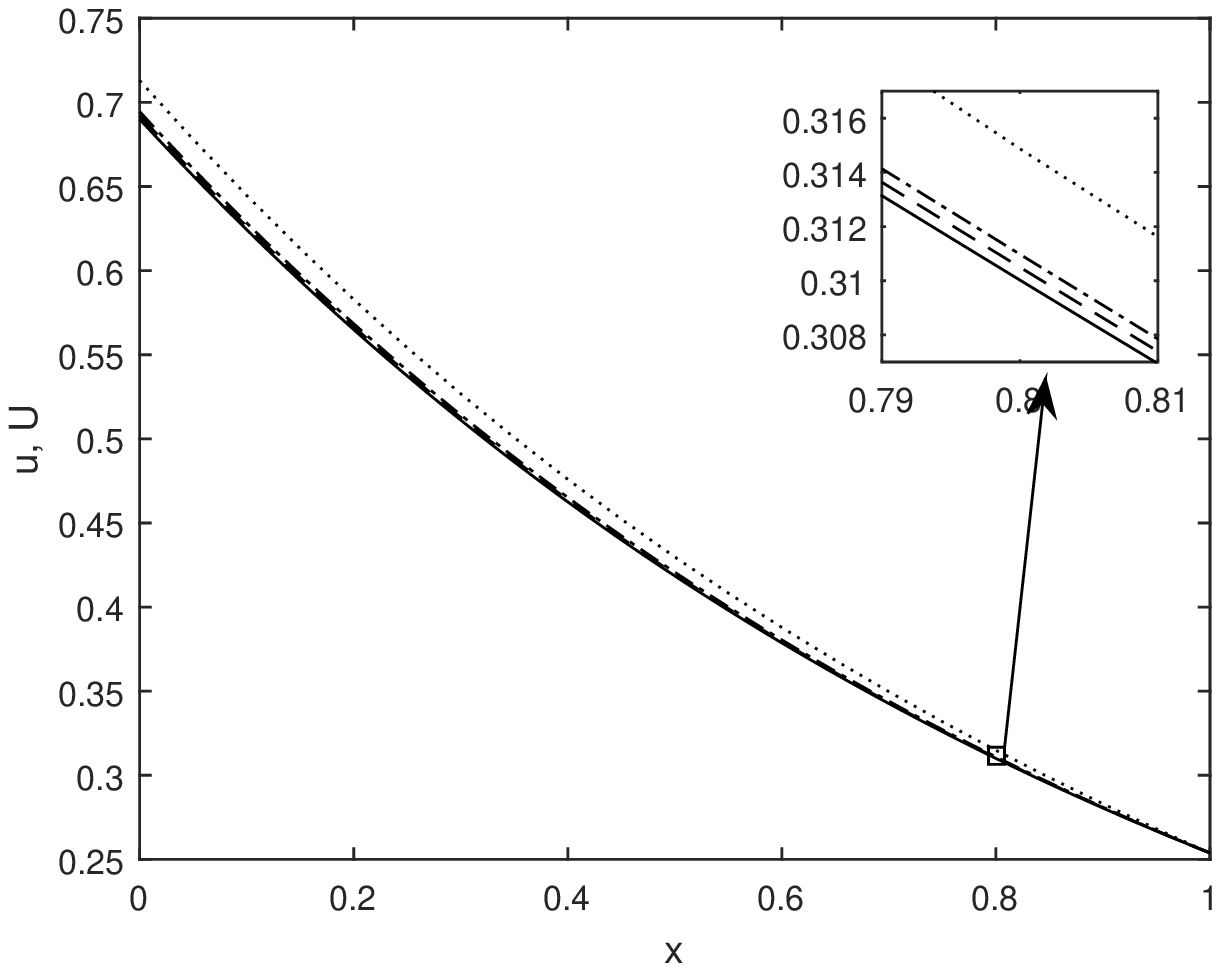}
		\label{fig:steady1}
	\end{subfigure}
	\begin{subfigure}[b]{0.49\linewidth}
		\includegraphics[width=\linewidth]{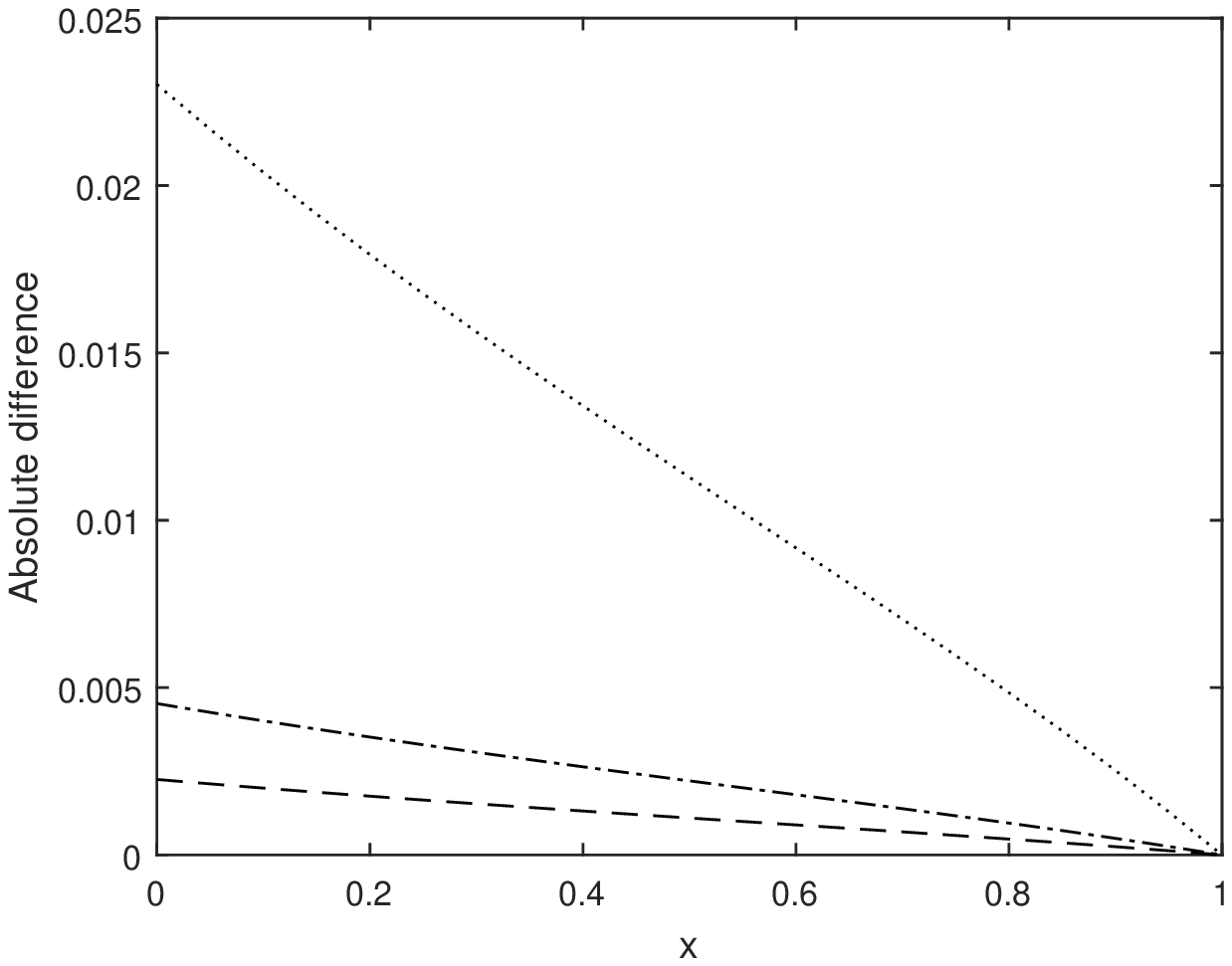}
		\label{fig:error1}
	\end{subfigure}
	\caption{The exact solution to \eqref{non_e1} and the approximate solutions to \eqref{new_scheme}  with $d(x,s)$, $B(x,s)$ given in Example \ref{example3}; Left: $u(x,0.2)$ (solid line), $\boldsymbol{U}_{0.05}$ (dotted line), $\boldsymbol{U}_{0.01}$ (dash-dotted line), $\boldsymbol{U}_{0.005}$ (dashed line) for $0\leq x \leq 1$, Right: $\mid u(x,0.2)- \boldsymbol{U}_{0.05}\mid$ (dotted line), $\mid u(x,0.2)- \boldsymbol{U}_{0.01}\mid$ (dash-dotted line) and $\mid u(x,0.2)- \boldsymbol{U}_{0.005}\mid$ (dashed line).}
	\label{1fig2}
\end{figure}
We display the exact solution $u$ to \eqref{non_e1} and the numerical solutions $U$ to  \eqref{new_scheme} in Figure \ref{1fig2}.
In Figure \ref{1fig2} (left), the exact solution $u$ to \eqref{non_e1} and numerical solutions to \eqref{new_scheme} with $h=0.05,\ 0.01, \ 0.005$ at $t=0.8$ are presented. From this figure, it is evident that $\boldsymbol{U}_{0.05}$, $\boldsymbol{U}_{0.01}$ and $\boldsymbol{U}_{0.005}$ are approaching to $u$  at $t=0.8$. 
  In Figure \ref{1fig2} (right), we show the absolute difference between $u$ and $\boldsymbol{U}_{h}$ at $t=0.8$, with $h=0.05,\ 0.01,\ 0.005$. We can see from these figures that the sequence of numerical solutions $\boldsymbol{U}_h$  indeed converges to the solution $u$ at $t=0.8$ as $h$ tends to 0.
\section*{Conclusions}
We have proposed a finite difference numerical scheme to the McKendrick-Von Foerster equation with diffusion \eqref{e1} in which  at the boundary point $x=0$ Robin condition is prescribed, and the Dirichlet condition is given at $x=a_\dagger$. Furthermore, we have proved that the proposed numerical scheme is stable restricted to the thresholds $R_h$. Moreover,  we have established that the given scheme is indeed convergent using a result due to Stetter. Using the similar  technique, one can easily obtain a convergent scheme when \eqref{e1} has nonlinear, nonlocal Neumann boundary condition at $x=0$. The result can be extended to M-V-D with nonlocal nonlinear Dirichlet boundary conditions at both the end points in a bounded domain. However, it is an interesting problem to design a convergent scheme for \eqref{e1} in the unbounded domain $[0,\infty)$.
\section*{Acknowledgements}
\noindent The first author would like to thank CSIR (award number: 09/414(1154)/2017-EMR-I) for providing the financial support for his research.
The second author is supported by Department of Science and Technology, India under MATRICS (MTR/2019/000848).
\bibliographystyle{abbrv}
\bibliography{references}
\end{document}